% Template article for preprint document class `elsart'
% SP 2001/01/05
% Modified CG (ESME) for Model 3, single column, 2 titles, abstract/résumé,
%  and 2 sets of keywords - 07.01.03 - file called Maths-English.tex
% English Version for Mathematics (CRAS series 1)
% Revamped, CG, 17.08.04, adding header, dates, and presenter

\documentclass{article}

% Use the option doublespacing or reviewcopy to obtain double line spacing
% \documentclass[doublespacing]{elsart}

% if you use PostScript figures in your article
% use the graphics package for simple commands
% \usepackage{graphics}
% or use the graphicx package for more complicated commands
% \usepackage{graphicx}
% or use the epsfig package if you prefer to use the old commands
% \usepackage{epsfig}

% The amssymb package provides various useful mathematical symbols
\usepackage{amssymb}
\usepackage{amsmath}
\usepackage{geometry} 
\usepackage{color}
\usepackage{graphicx}
\usepackage{subcaption}
\usepackage{mathrsfs}
\usepackage[show]{ed}
\geometry{ hmargin=2cm, vmargin=2cm } 
\usepackage[english,francais]{babel}

\def\R{\mathbb{R}}
\def\Z{\mathbb{Z}}

\def\bu{\mathbf{u}}
\def\bh{\mathbf{h}}
\def\bv{\mathbf{v}}
\def\be{\mathbf{e}}

%ENVIRONMENTS THEOREMS...
% These are predefined, and follow the numbering system used in the journal!
%English

\newtheorem{theorem}{Theorem}[section]

\newtheorem{e-proposition}[theorem]{Proposition}

\newtheorem{e-definition}[theorem]{Definition\rm}
%\newtheorem{remark}{\it Remark\/}
%\newtheorem{example}{\it Example\/}
%French

\newtheorem{proposition}[theorem]{Proposition}

\newtheorem{rem}{\it Remark}

\newcommand{\ri}{{\mathrm i}}
\newcommand{\bn}{{\mathbf n}}
\newcommand{\bx}{{\mathbf x}}

\newcommand{\bU}{{\mathbf U}}
\newcommand{\bH}{{\mathbf H}}

\newcommand\xdiv{\mathrm{div}\,}

\newcommand{\HatOmegaHole}{\hat{\Omega}}

\DeclareMathOperator{\rot}{\bf curl}
%\def\rot{\mbox{\bf curl\,}}

% divergence 
%\def\dive{\mbox{div}}
\DeclareMathOperator{\dive}{div}

\newcommand{\ie}{\textit{i.e\mbox{.}}}

\def\qtwoperpm{\tilde{q}^\pm_2}
\def\qoneperpm{\tilde{q}^\pm_1}
\setcounter{equation}{0}

\makeatletter
\DeclareRobustCommand{\qed}{%
  \ifmmode % if math mode, assume display: omit penalty etc.
  \else \leavevmode\unskip\penalty9999 \hbox{}\nobreak\hfill
  \fi
  \quad\hbox{\qedsymbol}}
\newcommand{\openbox}{\leavevmode
  \hbox to.77778em{%
  \hfil\vrule
  \vbox to.675em{\hrule width.6em\vfil\hrule}%
  \vrule\hfil}}
\newcommand{\qedsymbol}{\openbox}
\newenvironment{proof}[1][\proofname]{\par
  \normalfont
  \topsep6\p@\@plus6\p@ \trivlist
  \item[\hskip\labelsep\itshape
    #1.]\ignorespaces
}{%
  \qed\endtrivlist
}
\newcommand{\proofname}{Proof}

%%%%%%%%%%%%%%%%%%%%%%%%%%%%%%%%
%% GUILLEMETS (FRENCH QUOTES) %%
%%%%%%%%%%%%%%%%%%%%%%%%%%%%%%%%
\def\og{\leavevmode\raise.3ex\hbox{$\scriptscriptstyle\langle\!\langle$~}}
\def\fg{\leavevmode\raise.3ex\hbox{~$\!\scriptscriptstyle\,\rangle\!\rangle$}}

\begin{document}
% place in the next line the header (rubrique) chosen for your article,
% if you know it (you can also have 2, format : Header1/Header2
\centerline{}
%\begin{frontmatter}

% Title, authors and addresses

% use the thanksref command within \title, \author or \address for footnotes;
% use the ead command for the email address,
% and the form \ead[url] for the home page:
% \title{Title\thanksref{label1}}
% \thanks[label1]{}
% \author{Name\thanksref{label2}}
% \ead{email address}
% \ead[url]{home page}
% \thanks[label2]{}
% \address{Address\thanksref{label3}}
% \thanks[label3]{}
\selectlanguage{english}
%\title{Identification of the Faraday cage effect in a tri-dimensionnal plane periodic structure}
%\title{Homogenized interface conditions for electromagnetic shielding by thin periodic structures
\title{Electromagnetic shielding by thin periodic structures
and the Faraday cage effect
}
%Université Paris 13, Sorbonne Paris Cité, LAGA, CNRS (UMR 7539), 99, avenue Jean-Baptiste Clément, F-93430 Villetaneuse, France
%* Correspondence to: Bérangère Delourme, Université Paris 13, Sorbonne Paris Cité, LAGA, CNRS (UMR 7539), 99, avenue Jean-Baptiste Clément, F-93430 Villetaneuse,
%France.
%† E-mail: delourme@math.univ-paris13.fr

% use optional labels to link authors explicitly to addresses:
% \author[label1,label2]{}
% \address[label1]{}
% \address[label2]{}
% The [label1] can be suppressed if there is only one address for all authors

\selectlanguage{english}
\begin{center}
\Large{Electromagnetic shielding by thin periodic structures
and the Faraday cage effect.}
\end{center} 
\begin{center}{\large{B\'erang\`ere Delourme$^{(1)}$, David P.\ Hewett$^{(2)}$}} \end{center} 
\footnotesize{(1)  Laboratoire Analyse G\'eom\'etrie et Applications (LAGA), Universit\'e Paris 13,Villetaneuse, France, delourme@math.univ-paris13.fr }\\
\footnotesize {(2) Department of Mathematics, University College London, London, United Kingdom, d.hewett@ucl.ac.uk}
\normalsize
%\end{center} 
%Laboratoire Analyse G\'eom\'etrie et Applications (LAGA), Universit\'e Paris 13,Villetaneuse, France, delourme@math.univ-paris13.fr },
%\author{David P.\ Hewett,Department of Mathematics, University College London, 
%London, United Kingdom}
%{d.hewett@ucl.ac.uk}
%\author[authorlabel3]{}
%\ead{author.name3@email.address2}

%\address[authorlabel1]{Laboratoire Analyse G\'eom\'etrie et Applications (LAGA), Universit\'e Paris 13, 
%Sorbonne Paris Cit\'e, LAGA, CNRS (UMR 7539), 99, Avenue Jean-Baptiste Clément,  
%Villetaneuse, France}
%\address[authorlabel2]{Department of Mathematics, University College London, 
%London, United Kingdom}
%\address[authorlabel2]{Address3}

% If you know the dates of reception, and acceptation you can put them now;
%  idem the name of the person presenting the Note

\medskip

\begin{abstract}

\selectlanguage{english}
% Text of abstract in English
\noindent In this note we consider the scattering of electromagnetic waves (governed by the time-harmonic Maxwell equations) by a thin periodic layer of perfectly conducting obstacles. The size of the obstacles and the distance between neighbouring obstacles are of the same small order of magnitude $\delta$. %, $\delta$ being small. 
By deriving homogenized interface conditions for three model configurations, namely 
(i) discrete obstacles, 
(ii) parallel wires, 
(iii) a wire mesh, 
we show that the limiting behaviour as $\delta\to0$ depends strongly on the topology of the periodic layer, with full shielding (the so-called ``Faraday cage effect'') occurring only in the case of a wire mesh.
\end{abstract}
\vskip 0.5\baselineskip

%\selectlanguage{french} % Text of abstract in French
%\begin{center}\noindent{\bf R\'esum\'e} \vskip 0.5\baselineskip \noindent\end{center}
%{\bf Blindage \'electromagn\'etique par des structures fines et p\'eriodiques.}
% Dans cette note,  nous nous int\'eressons \`a la diffraction des ondes \'electromagn\'etiques (\'equations de Maxwell en r\'egime harmonique) par une nappe perfor\'ee plane constitu\'ee de petit obstacles parfaitement conducteurs plac\'ee \`a l'interface entre deux milieux homog\`enes. La taille des obstacles et la distance s\'eparant deux obstacles cons\'ecutifs sont du m\^eme ordre de grandeur $\delta$, $\delta$ suppos\'e petit.  En \'etudiant trois configurations mod\`eles ((i) obstacles 'discrets', (ii) fils parall\`eles, (iii) maillage constitu\'e de deux nappes de fils parall\`eles), nous montrons que la limite de la solution quand $\delta$ tend vers $0$ d\'epend de la forme des obstacles constituant la nappe p\'eriodique, le ph\'enom\`ene de ``cage de Faraday" n'apparaissant que dans le cas du maillage de fils. 

%%\end{frontmatter}
\selectlanguage{english}
% main text
\section{Introduction}\label{}
\noindent 
The ability of wire meshes to block electromagnetic waves (the celebrated ``Faraday cage'' effect) is well known to physicists and engineers. Experimental investigations into the phenomenon date back over 180 years to the pioneering work of Faraday \cite{Faraday}, and the effect is routinely used to block or contain electromagnetic fields in countless practical applications. (An everyday example is the wire mesh in the door of a domestic microwave oven, which stops microwaves escaping, while letting shorter wavelength visible light pass through it.) 
%But, somewhat remarkably, a proper mathematical analysis of the effect does not seem to be available in the literature. 
But, somewhat remarkably, a rigorous mathematical analysis of the effect does not appear to be available in the literature. 

The mathematical richness of the Faraday cage effect was highlighted in an recent article by one of the authors \cite{ChapmanHewettTrefethen}, where 
a number of different mathematical approaches were applied to the 
%we focussed on the 
2D electrostatic version of the problem. 
In particular it was shown in \cite{ChapmanHewettTrefethen} how modern techniques of homogenization and matched asymptotic expansions could be used to derive effective interface conditions that accurately capture the shielding effect. 
These results were generalised to the 
2D electromagnetic case (TE- and TM polarizations) in \cite{HewettHewitt}, and related approximations for similar problems have also been studied recently by other authors, e.g.\ \cite{Holloway,MarigoMaurel}.
However, as far as we are aware, an analysis of the full 3D electromagnetic version of the problem with perfectly conducting scatterers (the focus of the current note) has not previously been performed. (We note that related approximations have been presented for thin layers of dielectric obstacles in \cite{DelourmeWellPosedMax,DelourmeHighOrderMax}.) 

In this note we consider full 3D electromagnetic scattering by a thin periodic layer of small, perfectly conducting obstacles. We derive leading-order homogenized interface conditions for three model configurations, namely where the periodic layer comprises (i) discrete obstacles, 
(ii) parallel wires, and (iii) a wire mesh. 
Our results verify that 
%(as is well known to engineers) 
the effective behaviour depends strongly on the topology of the periodic layer, with shielding of arbitrarily polarized waves occurring only in the case of a wire mesh. We note that analogous observations have been made in the related setting of volume homogenization in \cite{SchweizerUrban}.
%we show that the limiting behaviour as $\delta\to0$ depends strongly on the topology of the periodic layer, with full shielding (the so-called ``Faraday cage effect'') occurring only in the case of a wire mesh.

%We demonstrate 

%Complicated because have small subwavelength scale scatterers. In recent years a number of authors have been applying modern homogenization methods to derive effective boundary conditions

Our analysis assumes that the obstacles/wires making up the thin periodic layer are of approximately the same size/thickness as the separation between them. The case of very small obstacles/thin wires is expected to produce different interface conditions, analogous to those derived in \cite{ChapmanHewettTrefethen,HewettHewitt} in the 2D case. But we leave this case for future work.
%Furthermore, to simplify the analysis as much as possible we restrict our attention to flat interfaces, and . %Say that it's well-known to physicists/engineers (and suggested by mathematical studies in 2D) that to shield EM waves of arbitrary polarization you need a wire mesh - parallel wires can only shield certain polarizations. Microwave oven doors. \cite{ChapmanHewettTrefethen}
%We mention also the bulk homogenization of 
\section{Statement of the problem}\label{}
\noindent 
Our objective is to derive effective interface conditions for electromagnetic scattering by a thin periodic layer of equispaced perfectly-conducting obstacles on the interface $\Gamma = \{\mathbf{x} = (x_1, x_2, x_3) \in \R^3 : x_3 =0\}$. 
%Let us consider the unit cube
%$ \Omega = \left\{ \mathbf{x} = (x_1, x_2, x_3) \in \R^3 \mbox{ such that } -1/2 <  x_i < 1/2 \; \forall i \in \{1, 2,3\}\right\}$.   We shall construct a thin periodic layer of equi-spaced obtacles along the interface $\Gamma = \{ y_3 =0\}$. To do so,
%We note by $\Gamma_i^\pm$  its face of normal $\pm \be_i$ ($(\be_1, \be_2, \be_3)$ denotes the euclidian basis of $\R^3$) .  for any $ i \in \{1, 2,3\}$.
Let  $\HatOmegaHole \in \R^3$  be the canonical  obstacle described by one of the following three cases (see Fig.~\ref{HatOmegaHolen}):
\begin{enumerate}
\item $\HatOmegaHole$ is a simply connected Lipschitz domain whose closure is contained in 
%$]0,1[^2 \times ]-\frac{1}{2}, \frac{1}{2}[$.
$(0,1)^2 \times (-\frac{1}{2}, \frac{1}{2})$.
\item 
%$\HatOmegaHole = ]0,1[ \times ]\frac{3}{8}, \frac{5}{8}[ \times ]-\frac{1}{8}, \frac{1}{8}[ $,
$\HatOmegaHole = [0,1] \times (\frac{3}{8}, \frac{5}{8}) \times (-\frac{1}{8}, \frac{1}{8}) $, \ie~a wire (of square section) parallel to the direction $\be_1$.
\item 
%$\HatOmegaHole = \{ ]0,1[ \times ]\frac{3}{8}, \frac{5}{8}[ \times ]-\frac{1}{8}, \frac{1}{8}[ \} \cup \{ ]\frac{3}{8}, \frac{5}{8}[ \times ]0,1[\times ]-\frac{1}{8}, \frac{1}{8}[\}$, 
$\HatOmegaHole = \{ [0,1] \times (\frac{3}{8}, \frac{5}{8}) \times (-\frac{1}{8}, \frac{1}{8}) \} \cup \{ (\frac{3}{8}, \frac{5}{8}) \times [0,1]\times (-\frac{1}{8}, \frac{1}{8})\}$, \ie~a cross-shape domain made of the union of two perpendicular wires (one parallel to $\be_1$ and the other parallel to $\be_2$).
\end{enumerate} 
\begin{figure}[htbp]
        \centering
        \begin{subfigure}[b]{0.3\textwidth}
        \begin{center}
                \includegraphics[width=0.8\textwidth, trim = 8cm 20cm 8cm 4.5cm, clip]{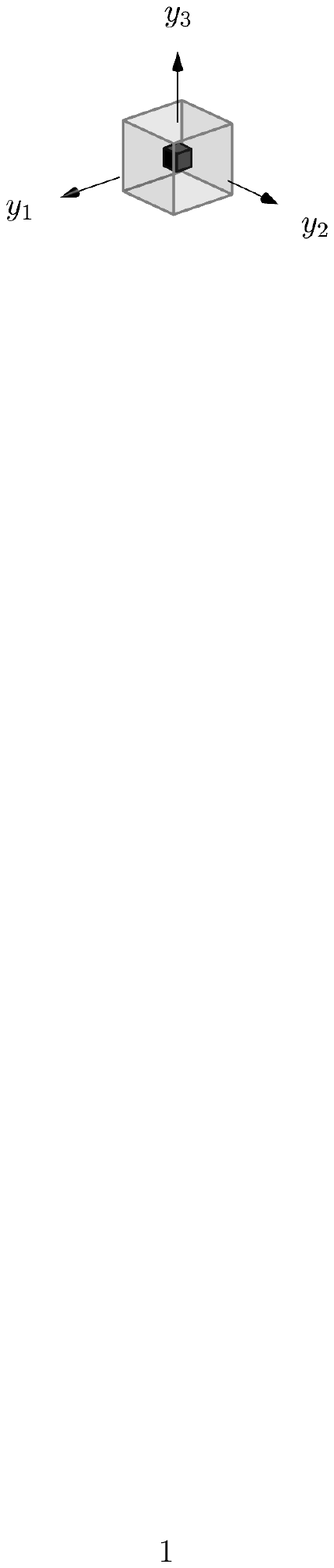}
                \caption{Case (i)}
                \end{center}
        \end{subfigure}% 
        \begin{subfigure}[b]{0.3\textwidth}
        \begin{center}
                \includegraphics[width=0.8\textwidth,trim = 8cm 20cm 8cm 4.5cm, clip]{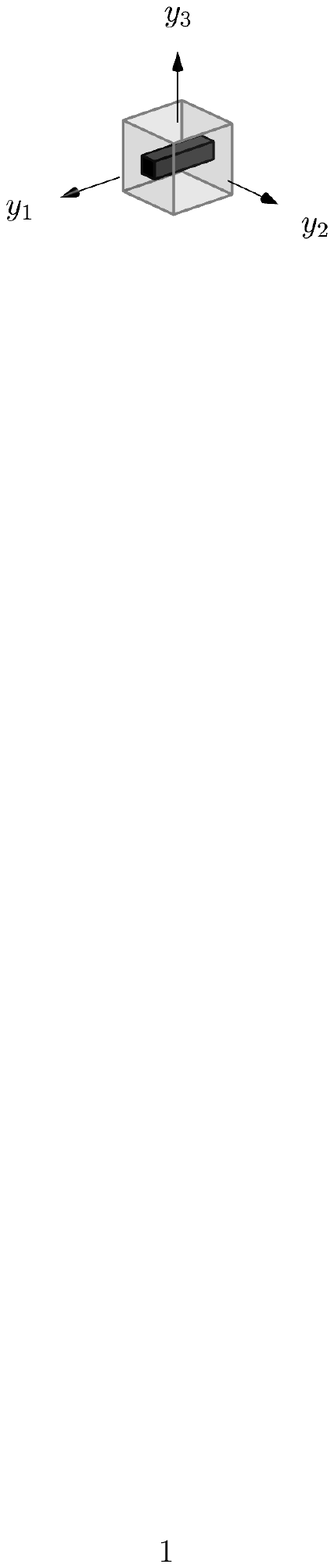}
                \caption{Case (ii)}
                \end{center}
        \end{subfigure}%
         \begin{subfigure}[b]{0.3\textwidth}
        \begin{center}
                \includegraphics[width=0.8\textwidth, trim = 8cm 20cm 8cm 4.5cm, clip]{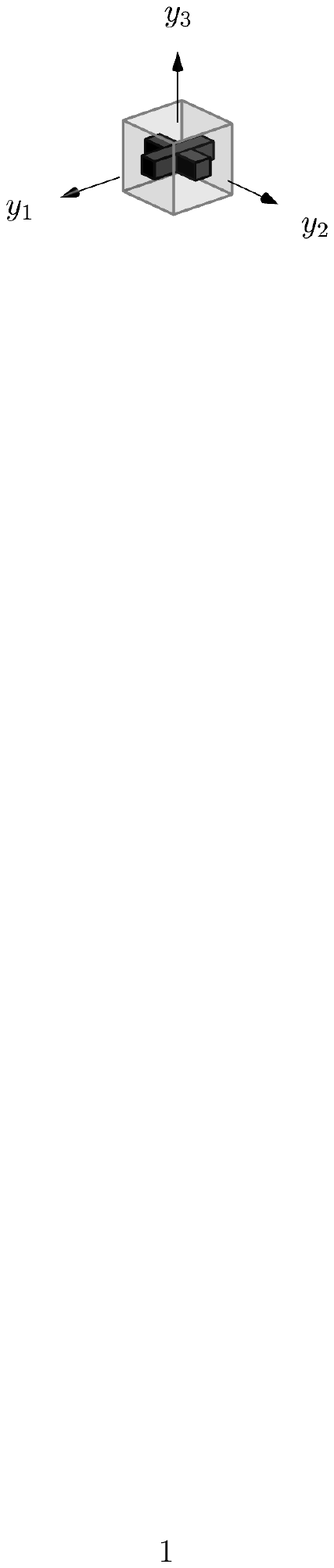}
                \caption{Case (iii)}
                \end{center}
        \end{subfigure}%
              \caption{The canonical obstacle $\HatOmegaHole$ in the three cases under consideration.}\label{HatOmegaHolen}
 \end{figure}
\noindent We construct the thin layer as a union of scaled and shifted versions of the canonical obstacle $\HatOmegaHole$. 
For $\delta>0$ we define $\mathscr{L}^{\delta}\subset \R^2\times[-\delta/2,\delta/2]$ by
\[
\mathscr{L}^{\delta} = 
{\rm int}\left( \bigcup_{(i,j) \in \Z^2}
%\bigcup_{\begin{array}{l} (i,j) \in \N^2, 1 \leq i \leq N ,1 \leq j \leq N \end{array}} 
%\Omega_{i,j }^\delta,
\delta\left\lbrace \overline{\HatOmegaHole} + i \mathbf{e}_1 + j \mathbf{e}_2  \right\rbrace\right).
%= \{\bx\in\R^3 \text{ such that } \frac{\bx}{\delta} - i \mathbf{e}_1 - j \mathbf{e}_2 \in \}.
\] 
%thin layer of small heterogeneities by scaling and shifting the canonical obstacle $\HatOmegaHole$: 
%For any couple $(i,j)$  of integers  such that $1 \leq i \leq N$, $1 \leq j\leq N$, we define
%\[
%\Omega_{i,j }^\delta= \left\lbrace \bx \in \R^3, \frac{\bx}{\delta} - i \mathbf{e}_1 - j \mathbf{e}_2 \in \OmegaHole \right\rbrace
%\]
Our domain of interest is then $\Omega^\delta = %\Omega 
\R^3
\setminus  \overline{\mathscr{L}^{\delta}}$ (cf. Fig~\ref{OmegaDelta}), and we define $\Gamma^\delta = \partial \Omega^\delta$.  
%For $i \in \{1,2,3\}$, we denote by  $\Gamma_i^{\pm,\delta}$ the lateral sides of normal $\pm \be_i$ of $\Omega^\delta$ 
%($\Gamma_i^{\pm,\delta} = \Gamma_i^\pm \cap \partial \Omega^\delta$),  
%and by $\Gamma_{\textrm{hole}}^\delta$ the boundary of the obstacles
\begin{figure}[htbp]
        \centering
        \begin{subfigure}[b]{0.3\textwidth}
        \begin{center}
                \includegraphics[width=1\textwidth, trim = 3.5cm 10.5cm 2.5cm 5cm, clip]{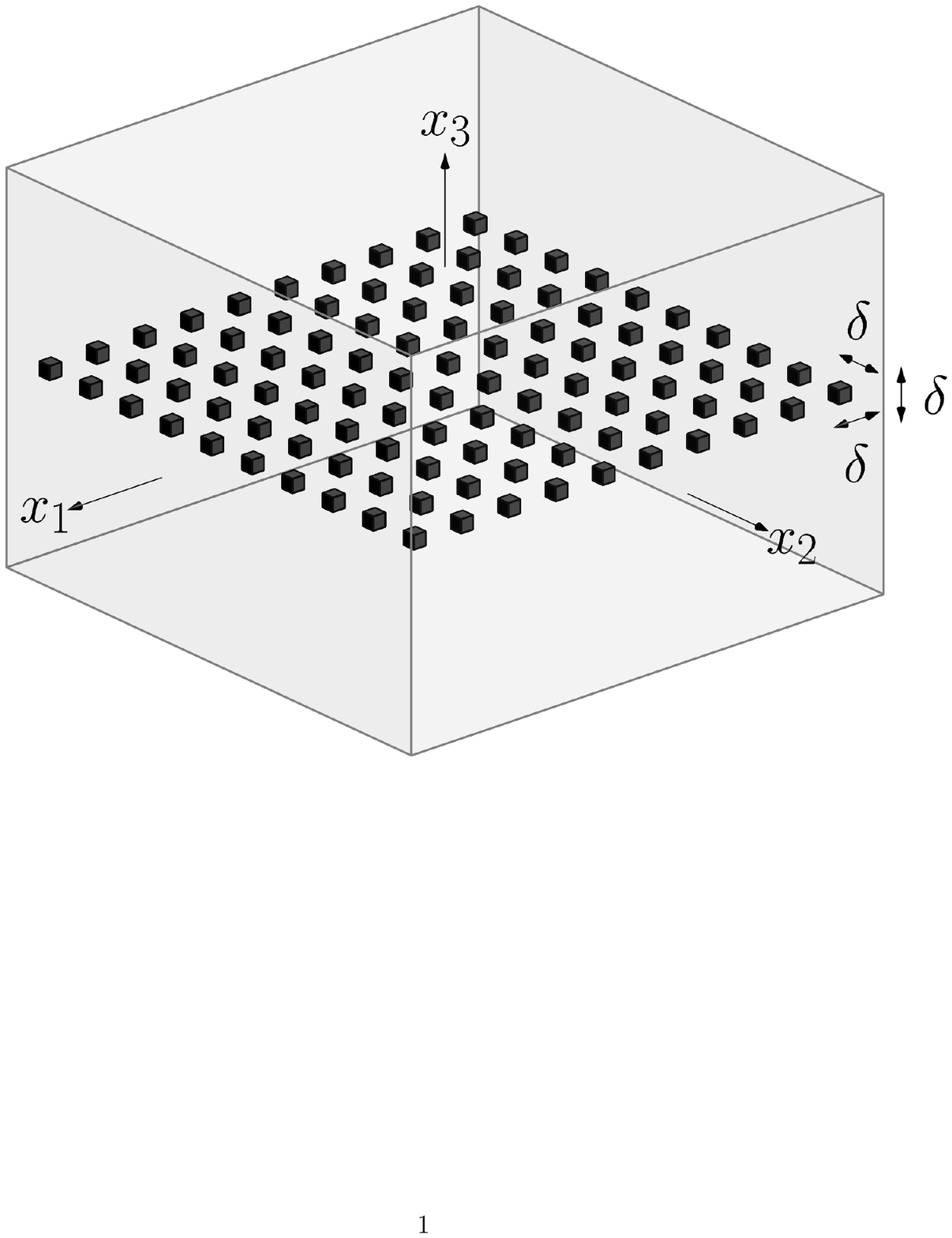}
                \caption{Case (i) - discrete obstacles}
                \end{center}
        \end{subfigure}% 
        \begin{subfigure}[b]{0.3\textwidth}
        \begin{center}
                \includegraphics[width=1\textwidth,trim = 3.5cm 10.5cm 2.5cm 5cm, clip]{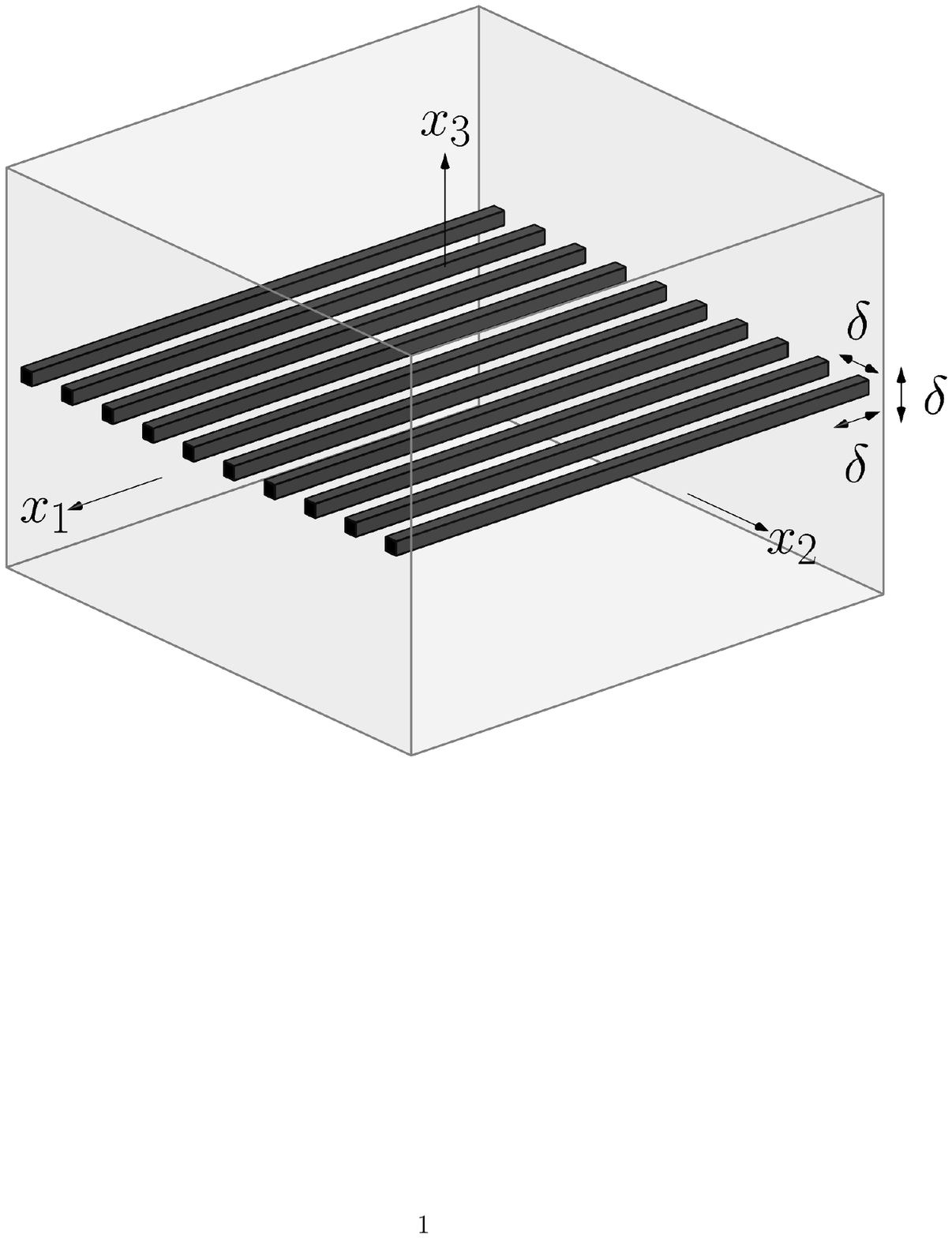}
                \caption{Case (ii) - parallel wires}
                \end{center}
        \end{subfigure}%
         \begin{subfigure}[b]{0.3\textwidth}
        \begin{center}
                \includegraphics[width=1\textwidth, trim = 3.5cm 10.5cm 2.5cm 5cm, clip]{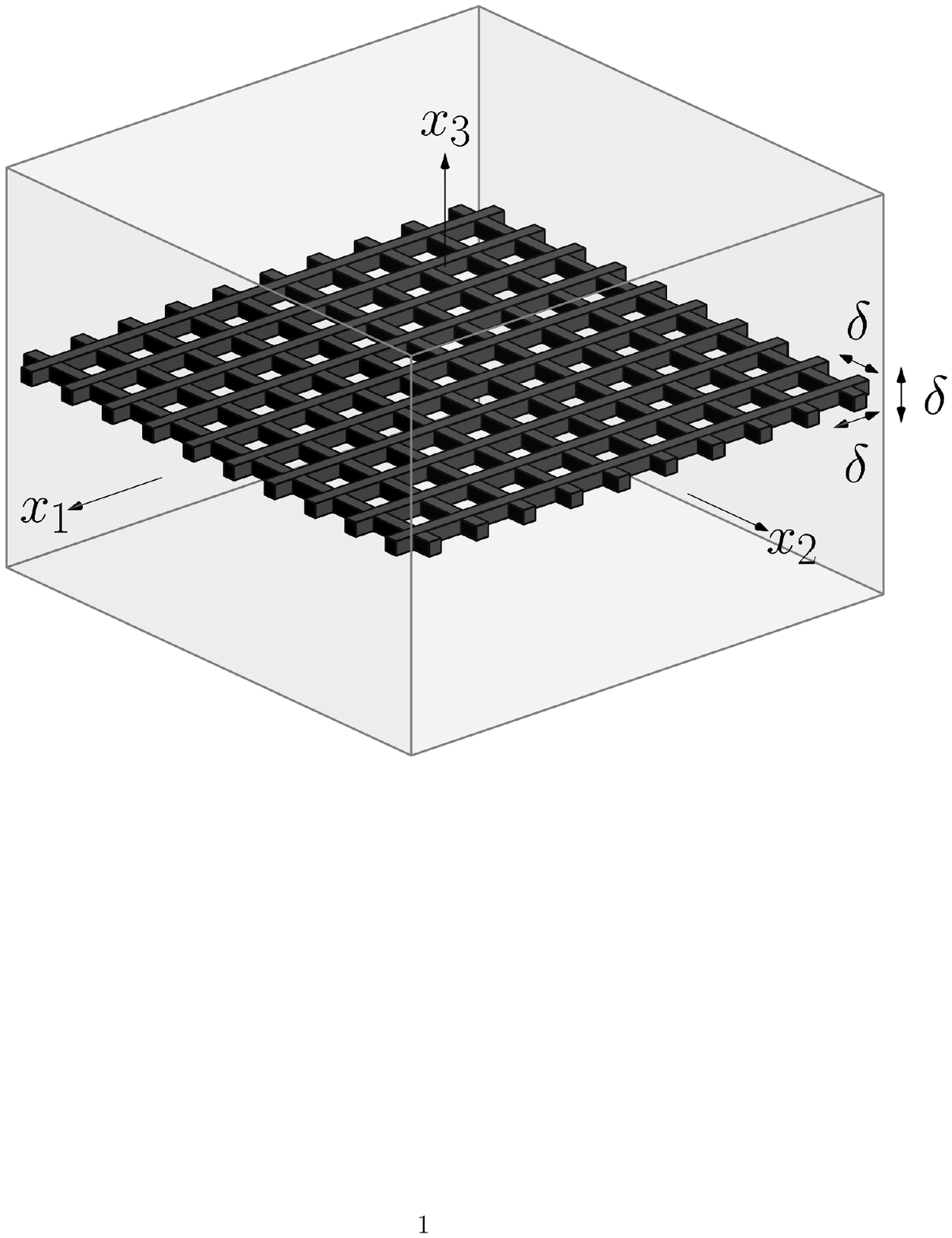}
                \caption{Case (iii) - wire mesh}
                \end{center}
        \end{subfigure}%
              \caption{The domain $\Omega^\delta$ in the three cases under consideration.}\label{OmegaDelta}
 \end{figure}
 
%\noindent Since we consider a periodic problem, we denote by $\Omega^\delta_{\sharp}$ the unbounded domain obtained by periodizing  $\Omega^\delta$ in the directions $\be_1$ and $\be_2$.
%$$\Omega^\delta_{\sharp} = \bigcup_{(i,j)\in \Z^2} \left\{ (\Omega^\delta \cup \Gamma_1^{+,\delta} \cup \Gamma_2^{+,\delta}) + i \mathbf{e}_1 + j \mathbf{e}_2 \right\}.$$
\noindent On the domain $\Omega^\delta$ we consider the solution $\bu^\delta$ of the Maxwell equations  
\begin{equation}\label{MaxwellEquation}
\rot \rot \bu^\delta - \omega^2 \varepsilon \bu^\delta = \mathbf{f} \quad  \mbox{in }  \Omega^\delta,
\end{equation}
where $\omega>0$ and $\varepsilon\in \mathbb{C}$, 
subject to the perfectly conducting boundary condition
\begin{equation}\label{BoundaryCondition}
\bu^\delta \times \bn = 0 \quad \mbox{on} \quad \Gamma^\delta.
\end{equation}
%To simplify analysis as simple as possible, and focus on the interface homogenization, 
For analytical convenience we avoid any complications arising from far-field behaviour by assuming that 
%$\omega>0$ and $\varepsilon\in \mathbb{C}$ 
%is supposed to be a constant complex number that  satisfies
%with 
${\rm Re}[\varepsilon] > 0$ and ${\rm Im}[\varepsilon] > 0$. %, so that the background medium is slightly absorbing.
The assumption that ${\rm Im}[\varepsilon] > 0$ 
%is made purely for analytical convenience and 
could be relaxed to ${\rm Im}[\varepsilon] \geq 0$ at the expense of some technical modifications, including the imposition of an appropriate radiation condition.
We also assume that the support of $\mathbf{f}$ does not intersect the interface $\Gamma$. 
Then, given $\mathbf{f} \in \left(L^2(\Omega^\delta)\right)^3$, the Lax-Milgram Lemma ensures that Problem~(\ref{MaxwellEquation})-(\ref{BoundaryCondition}) has a unique solution $\bu^\delta$ in the standard function space 
%$\in H(\rot ;  \Omega^\delta)$
%The function space setting is 
\begin{equation}\label{DefinitionHrot}
H(\rot ;  \Omega^\delta) = \left\{\bv \in (L^2(\Omega^\delta))^3: \rot \bv \in (L^2(\Omega^\delta))^3 \right\},
\end{equation} 
equipped with the usual graph norm
$
\| \bv  \|_{H(\rot ;  \Omega^\delta)} = \big( \| \bv  \|_{\left(L^2(\Omega^\delta)\right)^3}^2 + \| \rot \bv \|_{\left(L^2(\Omega^\delta)\right)^3}^2 \big)^{1/2}$.
Moreover, one can prove that there exists $C>0$, independent of $\delta$, such that
\begin{equation}\label{Stability}
\| \bu^\delta \|_{H(\rot ;  \Omega^\delta)} \leq C \| \mathbf{f}  \|_{\left(L^2(\Omega^\delta)\right)^3}, \qquad \text{for all }0<\delta<1.
\end{equation} 
The objective of this work is to identify formally the limit $\bu_0$ of $\bu^\delta$ as $\delta$ tends to $0$.  This limit solution is defined in the 
%limit domain defined as the 
union of two distinct domains $\Omega^\pm = \{\bx\in\R^3 :\pm x_3>0\}$, whose common interface is $\Gamma$. Our main result is the following:
% the following:
\begin{theorem}\label{Prop1} The limit solution  $\bu^0$ satisfies the Maxwell equations
\begin{equation}\label{EquationVolumeu0}
\rot \rot \bu^0 - \omega^2 \varepsilon \bu^0 = \mathbf{f} \quad \mbox{in }  \Omega^+ \cup \Omega^-,
\end{equation} 
together with the following interface conditions on $\Gamma$:
\begin{enumerate}
\item[] Case (i): $[ \bu_0 \times \be_3 ]_{\Gamma} = \boldsymbol{0}$ and $[\rot \bu_0  \times \be_3]_{\Gamma}=0$.
%\item[] Case (i): $[ (\bu_0)_1 ]_{\Gamma} = [ (\bu_0)_2 ]_{\Gamma} = 0$ and $[\rot \bu_0  \times \be_3]_{\Gamma}=0$.
\item[] Case (ii): $\bu_0 \cdot \be_1 = 0$ on $\Gamma$, $[ \bu_0 \cdot \be_2]_{\Gamma}=0$, and $[ (\rot \bu_0 \times \be_3)\cdot \be_2 ]_{\Gamma} = 0$.
\item[] Case (iii): $\bu_0 \times \be_3=\boldsymbol{0}$ on $\Gamma$.
%$(\bu_0)_1 = (\bu_0)_2 = 0$ on $\Gamma$, 
\end{enumerate} 
\end{theorem}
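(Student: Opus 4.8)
The plan is to derive the interface conditions via the two-scale / matched asymptotic expansion technique, with the periodic cell problem playing the central role. I would introduce the rescaled ``fast'' variable $\mathbf{y} = \mathbf{x}/\delta$ and postulate an ansatz in which, away from $\Gamma$, the field admits a regular expansion $\mathbf{u}^\delta(\mathbf{x}) = \mathbf{u}^0(\mathbf{x}) + \delta\,\mathbf{u}^1(\mathbf{x}) + \cdots$ (the ``outer'' or ``macroscopic'' expansion, solving the Maxwell equations in $\Omega^+\cup\Omega^-$ by construction), while in a $\delta$-neighbourhood of $\Gamma$ the field is described by an ``inner'' expansion $\mathbf{u}^\delta(\mathbf{x}) \approx \mathbf{U}^0(\mathbf{x}_{12}, \mathbf{y}) + \delta\,\mathbf{U}^1(\mathbf{x}_{12},\mathbf{y}) + \cdots$, where $\mathbf{x}_{12}=(x_1,x_2)$ is the slow tangential variable and $\mathbf{y}$ ranges over the infinite periodic cell $\hat\Omega^\infty = (\R\times\R\times\R) \setminus \bigcup_{(i,j)\in\Z^2}(\overline{\hat\Omega}+i\mathbf{e}_1+j\mathbf{e}_2)$, which is $1$-periodic in $y_1,y_2$ and stabilises to two half-spaces as $y_3\to\pm\infty$. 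Inserting the inner ansatz into \eqref{MaxwellEquation}--\eqref{BoundaryCondition} and collecting powers of $\delta$ gives a hierarchy of cell problems on $\hat\Omega^\infty$: at leading order $\rot_{\mathbf{y}}\rot_{\mathbf{y}}\mathbf{U}^0 = 0$ in $\hat\Omega^\infty$ with $\mathbf{U}^0\times\mathbf{n}=0$ on the obstacle boundary, plus the requirement that $\mathbf{U}^0$ have at most polynomial (in fact bounded, up to linear) growth at $y_3\to\pm\infty$.

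The key step is then to classify the admissible behaviour of the leading cell solution $\mathbf{U}^0$. One shows that bounded (or sublinear) curl-free, divergence-compatible fields on $\hat\Omega^\infty$ satisfying the perfect-conductor condition form a finite-dimensional space whose dimension depends on the topology of $\hat\Omega$ — this is where the case distinction enters. In Case (i) the complement $\hat\Omega^\infty$ is connected with trivial relevant (co)homology in the tangential directions, so $\mathbf{U}^0$ is essentially constant in $\mathbf{y}$ and both tangential components $u^0_1,u^0_2$ and the tangential trace of $\rot\mathbf{u}^0$ pass through continuously, giving $[\mathbf{u}_0\times\mathbf{e}_3]_\Gamma = \mathbf{0}$, $[\rot\mathbf{u}_0\times\mathbf{e}_3]_\Gamma = \mathbf{0}$. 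In Case (ii) the wires parallel to $\mathbf{e}_1$ block the $\mathbf{e}_1$-component: the constraint $\mathbf{U}^0\times\mathbf{n}=0$ on an infinite cylinder forces $u^0_1 = 0$ on $\Gamma$, while the $\mathbf{e}_2$-direction remains ``open'' so $[u^0_2]_\Gamma=0$ and the corresponding curl component matches, $[(\rot\mathbf{u}_0\times\mathbf{e}_3)\cdot\mathbf{e}_2]_\Gamma=0$. In Case (iii) the mesh blocks both tangential directions, forcing $\mathbf{U}^0\times\mathbf{e}_3 = 0$ on $\Gamma$ — full shielding. Concretely this classification is obtained by testing the cell equation against $\mathbf{U}^0$ itself over a truncated cell $\hat\Omega^\infty\cap\{|y_3|<T\}$, integrating by parts, using the boundary condition to kill the obstacle contributions, and letting $T\to\infty$; the surviving boundary terms at $y_3 = \pm T$ encode precisely the jump relations, and a Poincaré-type / Liouville argument on the periodic strip pins down which components must vanish versus merely match.

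For the continuity of the non-blocked quantities I would exploit the matching conditions between inner and outer expansions in the intermediate region $\delta \ll |x_3| \ll 1$: the $y_3\to\pm\infty$ limits of $\mathbf{U}^0$ must agree with the $x_3\to0^\pm$ traces of $\mathbf{u}^0$, and the $O(1)$ (linear-in-$y_3$) part of $\mathbf{U}^0$ must match $\partial_{x_3}\mathbf{u}^0$, which — together with the structure of the Maxwell operator, expressing $\partial_{x_3}$ of tangential components in terms of $\rot\mathbf{u}^0$ — converts the cell-problem output into the stated jump conditions on $\mathbf{u}^0$ and $\rot\mathbf{u}^0$. The main obstacle, I expect, is the rigorous (or at least careful formal) analysis of the cell problems on the unbounded periodic domain $\hat\Omega^\infty$: establishing well-posedness in the right weighted/periodic functional framework, identifying the correct growth conditions at infinity compatible with matching, and in particular proving the topology-dependent dimension count of leading-order solutions. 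Once that classification is in hand, assembling the three sets of interface conditions and checking consistency with the uniform stability bound \eqref{Stability} (which guarantees $\mathbf{u}^\delta$ does indeed converge and legitimises the ansatz) is comparatively routine.
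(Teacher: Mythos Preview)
Your overall strategy --- matched asymptotic expansions with an inner expansion on the infinite periodic cell, a topology-dependent classification of the leading-order cell solutions, and $O(1)$ matching to read off the interface conditions --- coincides with the paper's. The implementation differs in one important respect. Rather than working with the second-order equation $\rot_{\mathbf y}\rot_{\mathbf y}\mathbf U^0=0$ and trying to recover the conditions on $\rot\mathbf u^0$ from linear-in-$y_3$ growth or higher-order matching, the paper first passes to the first-order system for the pair $(\mathbf u^\delta,\mathbf h^\delta)$ with $\mathbf h^\delta=(\ri\omega)^{-1}\rot\mathbf u^\delta$. At leading order this yields \emph{two} bounded, curl-free, divergence-free near fields $\mathbf U_0$ and $\mathbf H_0$, with boundary conditions $\mathbf U_0\times\mathbf n=0$ and $\mathbf H_0\cdot\mathbf n=0$ respectively. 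The classification problem then becomes the computation of the two finite-dimensional ``cohomology'' spaces $K_N(\mathscr{B}_\infty)$ and $K_T(\mathscr{B}_\infty)$, which the paper carries out explicitly by writing every element as the gradient of a scalar harmonic potential (Dirichlet data for $K_N$, Neumann data for $K_T$) and counting how many independent potentials of the form $y_i+(\text{periodic corrector})$ exist --- this count is exactly where the topology enters, and in cases (ii)--(iii) requires introducing cuts to make the domain simply connected. Only $O(1)$ matching is then needed: the jump conditions on $\rot\mathbf u^0\times\mathbf e_3$ come directly from the $K_T$ analysis of $\mathbf H_0$, not from any higher-order terms of $\mathbf U_0$. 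This reduction to scalar Laplace problems is the main technical device your outline is missing; your integration-by-parts/Liouville sketch is in the right spirit but would need to be sharpened to reproduce the precise dimension count (4/3, 3/4, 2/5 for $K_N/K_T$ in the three cases).
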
 
Let us make a few comments on this result. First, we emphasize that the nature of the limit problem depends strongly on the topology of the thin layer of obstacles $\mathscr{L}^{\delta}$. In case (iii), where $\mathscr{L}^{\delta}$ comprises a wire mesh, we observe the ``Faraday cage effect'', where the effective interface $\Gamma$ is a solid perfectly conducting sheet. Hence if the support of $\mathbf{f}$ lies in $ \Omega^+$ (above the layer $\mathscr{L}^{\delta}$), then $\bu_0=\mathbf{0}$ in $\Omega_-$. 
%, \ie~the limiting value of the electric field in the domain $\Omega^-$ is just zero. 
In other words, despite the holes in its structure, the layer $\mathscr{L}^{\delta}$ shields the domain $\Omega^-$ from electromagnetic waves of all polarizations. 
At the opposite extreme, in case (i), where $\mathscr{L}^{\delta}$ comprises discrete obstacles, the interface is transparent and there is no shielding effect. In the intermediate case (ii), where $\mathscr{L}^{\delta}$ comprises an array of parallel wires, one observes polarization-dependent shielding. Fields polarized parallel to the wire axis are shielded, whereas those polarized perpendicular to the wire axis are not. 
%The present analysis tends to demonstrate that the shielding effect is only obtained  for periodic layer having a mesh structure, a periodic layer made of parallel wires leading to a partial shielding effect (case (ii)).
Note that this case (ii) includes as a subcase the simpler two-dimensional situation studied in \cite{HewettHewitt,Holloway,MarigoMaurel} where the fields are invariant in the direction of the wire axis. 
%Note that our result extends the results obtained in the two-dimensional case studied in~\cite{HewettHewiit} (and references therein), while the case (ii) has been investigated in \cite{ (their analysis strongly uses the fact that the medium is invariant with respect to $x_1$). We point out that similar results have been obtained in a full 3D periodic configuration in~\cite{SchweizerUrban}. Obviously, Proposition~\ref{Prop1} extends to more general geometrical configurations (for instance wires of arbitrary cross section). \\

\noindent The remainder of this note is dedicated to the formal proof of Theorem~\ref{Prop1}. The proof is based on the construction of an asymptotic expansion of $\bu^\delta$ using the method of matched asymptotic expansions (cf.~\cite{MazyaNazarovPlam}). To simplify the computation, we work with the first order formulation of~\eqref{MaxwellEquation}, introducing the magnetic field $\bh^\delta =  \frac{1}{\ri \omega} \rot \bu^\delta$ (see e.g.\ \cite{Monk}) and obtaining
\begin{equation}\label{MaxwellOrdre1}
\begin{cases}
- \ri \omega \bh^\delta  + \rot \bu^\delta = 0  & \mbox{in }  \Omega^\delta,\\ 
 - \ri \omega \bu^\delta  - \rot \bh^\delta = -\frac{1}{\ri \omega} \mathbf{f}   & \mbox{in }  \Omega^\delta,
 \end{cases} \quad \bu^\delta  \times \bn = 0  \mbox{ and }  \bh^\delta\cdot \bn = 0\mbox{ on } \Gamma^\delta. 
\end{equation} 
Far from the periodic layer $\mathscr{L}^{\delta}$, we construct an expansion of $\bh^\delta$ and $\bu^\delta$ of the form
 \begin{equation}\label{FFExpansion}
 \bh^\delta =  \bh_0(\bx) + \delta \bh_1(\bx) + \cdots,  \quad   \bu^\delta =  \bu_0(\bx) + \delta \bu_1(\bx) + \cdots, \quad \bx = (x_1, x_2, x_3), 
 \end{equation}
 and, in the vicinity of $\mathscr{L}^{\delta}$,
 \begin{equation}\label{NFExpansion}
  \bh^\delta =  \bH_0(x_1, x_2, \frac{\bx}{\delta}) + \delta \bH_1(x_1, x_2, \frac{\bx}{\delta}) + \cdots,  \quad   \bu^\delta =  \bU_0(x_1, x_2, \frac{\bx}{\delta})+ \delta \bU_1(x_1, x_2, \frac{\bx}{\delta}) + \cdots, \quad 
 \end{equation}
 where, for $i \in \{0,1\}$, $\bH_i(x_1, x_2,y_1, y_2, y_3)$ and $\bU_i(x_1, x_2,y_1, y_2, y_3)$ are assumed to be $1$-periodic in both $y_1$ and $y_2$.  Near and far field expansions communicate through so-called matching conditions, which ensure that the far and near field expansions coincide in some intermediate areas. Since we are only interested in the leading order terms, it is sufficient to consider only the $O(1)$ matching conditions, namely
 \begin{equation}\label{MatchingConditionOrdre0}
 \lim_{x_3 \rightarrow 0^\pm} \bh_0 = \lim_{y_3 \rightarrow \pm \infty}\bH_0 \quad \mbox{and } \quad \lim_{x_3 \rightarrow 0^\pm} \bu_0 = \lim_{y_3 \rightarrow \pm \infty}\bU_0.
 \end{equation} 
 Inserting~\eqref{FFExpansion} into~\eqref{MaxwellEquation} and separating the different powers of $\delta$ directly gives~\eqref{EquationVolumeu0}. To obtain the interface conditions, we have to study the problems satisfied by $\bU_0$ and $\bH_0$:
 \begin{equation}\label{NearFieldProblemOrder0}
\begin{cases}
\rot_y \bU_0  = 0  & \mbox{in } \mathscr{B}_\infty, \\
 \dive_y \bU_0 =  0 & \mbox{in } \mathscr{B}_\infty, \\
 \bU_0 \times \bn =  0 & \mbox{on } \partial \mathscr{B}_\infty, \\
 \end{cases} \quad  \begin{cases}
\rot_y \bH_0  = 0  & \mbox{in } \mathscr{B}_\infty, \\
 \dive_y \bH_0 =  0 & \mbox{in } \mathscr{B}_\infty, \\
 \bH_0 \cdot \bn =  0 & \mbox{on } \partial \mathscr{B}_\infty,\\
 \end{cases}
 \mathscr{B}_\infty =  \Omega^1 = \R^3\setminus \overline{\mathscr{L}^1} .%\R^3  \setminus  \bigcup_{(i,j) \in \Z^2}  \left\{ \overline{\hat{\Omega}}+ i \be_1 + j \be_2 \right\}.
%  \mathscr{B}_\infty =  \R^3  \setminus  \bigcup_{(i,j) \in \Z^2}  \left\{ \overline{\hat{\Omega}}+ i \be_1 + j \be_2 \right\}.
 \end{equation}  
 \section{The spaces $K_N(\mathscr{B}_\infty)$ and $K_T(\mathscr{B}_\infty)$} 
\noindent 
%In this section, we 
Denoting by $\mathscr{B}$ the restriction of $\mathscr{B}_\infty$ to the strip $(0, 1)^2 \times (-\infty,\infty)$, 
%$\{ (y_1, y_2) \in (0, 1)^2\}$ 
%and 
we introduce the 
%following three 
spaces
\begin{multline}
\mathscr{H}_{N}(\mathscr{B}_\infty) = \left\{  
\bu \in H_{\rm loc}({\rot};\mathscr{B}_\infty) \cap H_{\rm loc}(\dive;\mathscr{B}_\infty):  
%\bu \in (\mathscr{D}'( \mathscr{B}_\infty))^3: 
\; \bu \mbox{ is }  1\mbox{-periodic in } y_1  \mbox{ and }  y_2,  \right. 
\\  \frac{\bu_{|\mathscr{B}}}{\sqrt{1+(y_3)^2}} \in (L^2(\mathscr{B}))^3, \quad   \rot \bu_{|\mathscr{B}}  \in (L^2(\mathscr{B}))^3,  \quad \xdiv \bu_{|\mathscr{B}} \in L^2(\mathscr{B}),    \quad \bu \times \bn = 0 \; \mbox{on} \; \partial \mathscr{B}_\infty  \big\} ,
%\Gammahole^\infty  \big\} 
\end{multline}
\begin{multline}
\mathscr{H}_{T}(\mathscr{B}_\infty) = \left\{  
\bh \in H_{\rm loc}({\rot};\mathscr{B}_\infty) \cap H_{\rm loc}(\dive;\mathscr{B}_\infty): 
%\bh \in (\mathscr{D}'(\mathscr{B}_\infty))^3:
\;\bh \mbox{ is 1-periodic in } y_1  \mbox{ and } y_2 , \right. 
\\  \frac{\bh_{|\mathscr{B}}}{\sqrt{1+(y_3)^2}} \in (L^2(\mathscr{B}))^3, \quad   \rot \bh_{|\mathscr{B}}  \in (L^2(\mathscr{B}))^3,  \quad \xdiv \bh_{|\mathscr{B}} \in L^2(\mathscr{B}),    \quad \bh \cdot \bn = 0 \; \mbox{on} \; 
\partial \mathscr{B}_\infty
%\Gammahole^\infty 
\big\},
\end{multline}
%Note that the latter two spaces 
both of which include periodic vector fields in $H_{\rm loc}({\rot};\mathscr{B}_\infty) \cap H_{\rm loc}(\dive;\mathscr{B}_\infty) $ that tend to a constant vector as $|y_3|\to\infty$. Investigation of \eqref{NearFieldProblemOrder0} requires the characterization of the so-called normal and tangential cohomology spaces $K_N(\mathscr{B}_\infty)$ and $K_T(\mathscr{B}_\infty)$ defined by (see \cite{DaugeBernardiAmrouche})
\begin{equation}
 K_N(\mathscr{B}_\infty) = \left\{ \bu \in \mathscr{H}_{N}(\mathscr{B}_\infty) ,  \rot \bu = 0, \dive \bu =0 \right\},\; K_T(\mathscr{B}_\infty) = \left\{ \bh \in \mathscr{H}_{T}(\mathscr{B}_\infty) ,  \rot \bh = 0, \dive \bh =0 \right\}.  
\end{equation}  
This characterization involves the representation of elements of $K_N(\mathscr{B}_\infty)$ and $K_T(\mathscr{B}_\infty)$ as gradients of harmonic scalar potentials, constructed by solving certain variational problems in the space
\begin{equation}\label{W1}
 \mathcal{W}_{1}(\mathscr{B_\infty}) = \big\{ p \in H^1_{\rm loc}(\mathscr{B}_{\infty}): \;p \mbox{ is 1-periodic in } y_1 \mbox{ and } y_2, \frac{p_{|\mathscr{B}}} {\sqrt{1+(y_3)^2}} \in L^2(\mathscr{B}),   \nabla p_{|\mathscr{B}}  \in L^2(\mathscr{B})  \big\},
\end{equation}
and variants of it. 
In each case the existence and uniqueness of the potential follows 
%techniques of variational formulations and 
from the Lax-Milgram Lemma. 
%While we do not provide full proofs, we remark that in order to deal with 
%The unbounded domain $\mathscr{B}$ requires us to apply the inequality
%The only non-standard element of the proofs (not provided here) of the existence and uniqueness for the requisite potentials is the unbounded nature of the domain $\mathscr{B}$, which requires use of
While we do not reproduce the proofs here, we remark that the unbounded nature of the domain $\mathscr{B}$ requires us, when verifying coercivity of the requisite bilinear forms, to appeal to the inequality
%The only non-standard element of the proofs (not provided here) is that in proving coercivity the unbounded nature of the domain $\mathscr{B}$ requires us to apply inequalities of the form
\begin{align}
\label{}
\left\|\frac{p}{\sqrt{1+(y_3)^2}}\right\|_{L^2(\mathscr{B_+})} \leq 2 \| \nabla p\|_{L^2(\mathscr{B_+})}, \qquad \mathscr{B_+} = \mathscr{B}\cap\{y_3>0\},
\end{align}
valid if $p\in C^\infty(\overline{\mathscr{B}_+})$, $p/\sqrt{1+(y_3)^2} \in L^2(\mathscr{B_+})$, $\nabla p\in L^2(\mathscr{B_+})$ and $p=0$ in a neighbourhood of $\{y_3=0\}$, 
which is an elementary consequence of the Hardy inequality~\cite[Lemma 2.5.7]{Nedelec}
\begin{align}
\label{}
\int_0^\infty t^{-2}|\varphi(t)|^2\,{\rm d} t \leq 4 \int_0^\infty |\varphi'(t)|^2\,{\rm d} t, \qquad \varphi\in C_0^\infty ((0,\infty)).
\end{align}

\subsection{Characterization of $K_N(\mathscr{B}_\infty)$}
\noindent To characterize $K_N(\mathscr{B}_\infty)$ we first define two functions $p^\pm_{3} \in  H^1_{\rm loc}(\mathscr{B}_\infty)$, $1$-periodic in $y_1$ and $y_2$, 
such that 
$$
\begin{cases}
- \Delta p^\pm_3 = 0 &   \mbox{in }\mathscr{B}_\infty,\\
p^\pm_3 = 0 & \mbox{on } \partial \mathscr{B}_\infty, \\
\end{cases}
\quad \lim_{y_3 \rightarrow \pm \infty} \nabla p^\pm_3  =  \be_3,   \quad
\lim_{y_3 \rightarrow \mp \infty} \nabla p^\pm_3  = 0.
$$ 
Then, in case (i) we introduce the functions $\tilde{p}_1\in \mathcal{W}_{1}(\mathscr{B_\infty})$ and $p_1 \in  H^1_{\rm loc}(\mathscr{B}_\infty)$,  such that 
%\ednote{Should we use the $\partial \mathscr{B}_{\infty,j}$ notation here? Also, I have deleted all $\Gamma^\infty_{hole}$ and replaced by $\partial\mathscr{B}_\infty$. Is that ok?}
$$
\begin{cases}
- \Delta \tilde{p}_1 = 0 &   \mbox{in }\mathscr{B}_\infty,\\
\tilde{p}_1 =  - \mathscr{P} \mathscr{R} y_1 & \mbox{on }  \partial \mathscr{B}_\infty, \\ 
\end{cases} \quad \mbox{and} \quad p_1 = \tilde{p}_1 + y_1.
$$ 
 Here, for any function $u \in L^2_{\rm loc}(\mathscr{B}_\infty)$, $\mathscr{R} u$ denotes its restriction to $\mathscr{B}$, while for any function $u \in  L^2_{\rm loc}(\mathscr{B})$, $\mathscr{P}u$ denotes its periodic extension to  $\mathscr{B}_\infty$. 
Similarly, in cases (i) and (ii) we introduce the functions $\tilde{p}_2\in \mathcal{W}_{1}(\mathscr{B_\infty})$ and $p_2 \in  H^1_{\rm loc}(\mathscr{B}_\infty)$,  such that 
$$
\begin{cases}
- \Delta \tilde{p}_2 = 0 &   \mbox{in }\mathscr{B}_\infty,\\
\tilde{p}_2 =  - \mathscr{P} \mathscr{R} y_2 & \mbox{on }  \partial \mathscr{B}_\infty, \\ 
\end{cases} \quad \mbox{and} \quad p_2= \tilde{p}_2 + y_2.
$$ 
We emphasize that it is not possible to construct $\tilde{p}_1$ in cases (ii) and (iii), and it is not possible to construct $\tilde{p}_2$ in case (iii).  An adaptation of the proof of \cite[Proposition 3.18]{DaugeBernardiAmrouche} leads to the following result:
\begin{proposition}\label{PropositionKN}~\\[-3ex]
 \begin{enumerate}
 \item[]{Case (i)}:  $K_N$ is the space of dimension $4$  given by $K_N(\mathscr{B}_\infty)  = span \left\{   \nabla  {p}_1,  \nabla  {p}_2 ,\nabla {p}^-_3,  \nabla p^+_3  \right\}.$ 
 \item[]{Case (ii)}: $K_N$ is the space of dimension $3$ given by
$
 K_N(\mathscr{B}_\infty)=  span \left\{   \nabla{p}_2, \nabla {p}^-_3,  \nabla p^+_3 \right\}.
 $
 \item[] {Case (iii)}: $K_N$ is the space of dimension $2$ given by
$
  K_N(\mathscr{B}_\infty)=  span \left\{   \nabla {p}^-_3,  \nabla p^+_3 \right\}.$
\end{enumerate} 
\end{proposition}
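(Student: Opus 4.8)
\textbf{Proof proposal for Proposition~\ref{PropositionKN}.}

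The plan is to follow the structure of the proof of \cite[Proposition 3.18]{DaugeBernardiAmrouche}, which characterizes cohomology spaces on bounded domains, and adapt it to the unbounded periodic strip $\mathscr{B}$. Since $K_N(\mathscr{B}_\infty)$ consists of curl-free, divergence-free fields with vanishing tangential trace, any $\bu\in K_N(\mathscr{B}_\infty)$ is locally a gradient $\nabla p$ of a harmonic function $p$; the point is to understand what ``multivalued'' freedom $p$ has, and this is governed by the first cohomology of the domain, i.e.\ by the homotopy classes of loops that cannot be contracted in $\mathscr{B}_\infty$. First I would verify that the proposed generators all genuinely lie in $K_N(\mathscr{B}_\infty)$: for $\nabla p_3^\pm$ this is immediate from the defining boundary-value problem (harmonicity gives curl-free and divergence-free, the Dirichlet condition $p_3^\pm=0$ on $\partial\mathscr{B}_\infty$ gives $\nabla p_3^\pm\times\bn=0$ there, and the prescribed limits at $\pm\infty$ together with the Hardy-type inequality show the weighted-$L^2$ decay is satisfied); for $\nabla p_1=\nabla\tilde p_1+\be_1$ (resp.\ $\nabla p_2$) one uses that $\tilde p_1\in\mathcal{W}_1(\mathscr{B}_\infty)$ is harmonic with $\nabla\tilde p_1\in L^2(\mathscr{B})$, and that on $\partial\mathscr{B}_\infty$ the tangential gradient of $p_1=\tilde p_1+y_1$ vanishes because $\tilde p_1=-y_1$ there up to the periodization bookkeeping encoded by $\mathscr{P}\mathscr{R}$ — here one must check carefully that $p_1$ is the restriction of a well-defined (single-valued) function on each period cell whose periodic extension has the stated regularity, which is exactly why $\nabla p_1$ rather than $p_1$ itself is the object that extends $1$-periodically.

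Next I would prove linear independence of the listed generators and then completeness. Independence is seen by pairing against suitable linear functionals: $\nabla p_3^+$ and $\nabla p_3^-$ are distinguished by their limits as $y_3\to+\infty$ versus $y_3\to-\infty$ (one tends to $\be_3$, the other to $0$), while $\nabla p_1,\nabla p_2$ have zero $y_3$-average of the third component and are distinguished from each other (and from the $p_3^\pm$) by the ``flux'' or circulation functionals $\bu\mapsto\int_0^1\bu\cdot\be_1\,dy_1$ (integrated along a segment crossing one period in the $y_1$-direction at fixed $y_2,y_3$, which is well-defined modulo the curl-free condition) and the analogous one in $y_2$. The heart of the argument is the reverse inclusion: given arbitrary $\bu\in K_N(\mathscr{B}_\infty)$, subtract the unique linear combination $c_+\nabla p_3^+ + c_-\nabla p_3^- + (\text{$c_1\nabla p_1+c_2\nabla p_2$ in cases (i),(ii),(i) respectively})$ that matches its limiting values at $y_3\to\pm\infty$ and its two periodic circulations; call the remainder $\bv$. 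Then $\bv$ is curl-free, divergence-free, has vanishing tangential trace, decays at infinity (its limits at $\pm\infty$ are $0$), and has zero circulation around every generating loop, so $\bv=\nabla q$ for a genuinely single-valued $q\in\mathcal{W}_1(\mathscr{B}_\infty)$ (or the appropriate variant) which is harmonic, satisfies a homogeneous Neumann-type condition coming from $\dive\bv=0$ and $\bv\cdot\bn$... wait — here $\bv\times\bn=0$ so $q$ is locally constant on $\partial\mathscr{B}_\infty$, hence (connectedness of each boundary component, after using periodicity) $q$ is constant on $\partial\mathscr{B}_\infty$; thus $q$ solves a homogeneous well-posed problem and the energy estimate forces $\nabla q=0$, i.e.\ $\bv=0$.

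The main obstacle, and the place where the three cases genuinely diverge, is the topology bookkeeping: one must identify precisely which circulation functionals are nontrivial on $K_N(\mathscr{B}_\infty)$ in each geometry, equivalently which loops in $\mathscr{B}_\infty$ are non-contractible. In case (i) the obstacle in each cell is a compact simply connected body, so $\mathscr{B}_\infty$ deformation-retracts (in the relevant homological sense) onto a set whose first cohomology is generated by the two ``straight-line'' periods in $y_1$ and $y_2$ plus the two ends at $y_3=\pm\infty$, giving dimension $4$; this is exactly the obstruction to constructing $\tilde p_1$ being absent. In case (ii) the infinite wire parallel to $\be_1$ now separates a loop in the $(y_1,y_3)$-plane... more precisely the presence of the connected wire running in the $\be_1$ direction kills the $y_1$-circulation class (any such loop can be pushed off to $|y_3|\to\infty$ past the wire, or rather the would-be potential $\tilde p_1$ fails to exist because its boundary data $-y_1$ is not compatible with periodicity around the wire), leaving the $y_2$-circulation and the two ends, dimension $3$. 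In case (iii) both the $\be_1$- and $\be_2$-directed wires are present and connected through the cross, killing both horizontal circulation classes and leaving only the two ends $p_3^\pm$, dimension $2$. I would make this rigorous not via algebraic topology but concretely: for the nonexistence claims, show that a harmonic function on $\mathscr{B}_\infty$ with boundary data $-\mathscr{P}\mathscr{R}y_1$ and finite Dirichlet energy would have a gradient that is $1$-periodic and curl-free with nonzero $y_1$-circulation yet exact, a contradiction; for the completeness claim in each case, the circulation/limit functionals used to define the subtracted combination are precisely those that a finite-energy gradient field can realize, so the remainder is forced to be exact and single-valued, closing the argument as above.
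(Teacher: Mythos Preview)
Your overall plan—verify membership, prove independence, then argue completeness by subtracting a suitable combination—is sound, but the topological bookkeeping you invoke is the one for $K_T$, not $K_N$. Circulations around non-contractible loops govern the space with the normal boundary condition $\bh\cdot\bn=0$; for $K_N$ the condition $\bu\times\bn=0$ lets you extend $\bu$ by zero across $\partial\mathscr{B}_\infty$ to a curl-free field on nested simply connected domains, so (as the paper does, citing \cite[Theorem~3.37]{Monk}) $\bu=\nabla p$ for a \emph{globally single-valued} $p\in H^1_{\rm loc}(\mathscr{B}_\infty)$ from the outset. There is no multivaluedness to manage. What distinguishes the three cases is instead the connectivity of $\partial\mathscr{B}_\infty$: the tangential condition forces $p$ to be constant on each connected component of the boundary, and the periodicity of $\nabla p$ gives $p(\cdot+\be_i)-p(\cdot)=\alpha_i$; evaluating this on a boundary component that extends through a full $y_i$-period forces $\alpha_i=0$.

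This is exactly where your completeness argument in case~(ii) has a gap. You need that every $\bu\in K_N(\mathscr{B}_\infty)$ has $\alpha_1=0$, but observing that $\tilde p_1$ fails to exist only removes one candidate generator; it does not bound $\dim K_N$ from above. The paper runs the argument the other way: $p$ equals a constant $c_j$ on the $j$-th wire, and since each wire is unbounded in $y_1$, the relation $p(y+\be_1)-p(y)=\alpha_1$ read along the wire gives $c_j-c_j=\alpha_1$, hence $\alpha_1=0$. Periodicity in $y_2$ then yields $c_j=c_0+\alpha_2 j$, and the remainder $\tilde p=p-\alpha_2 y_2-\sum_\pm\alpha_3^\pm p_3^\pm$ is identified with $c_0+\alpha_2\tilde p_2$ by uniqueness in $\mathcal{W}_1(\mathscr{B}_\infty)$. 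Cases~(i) and~(iii) follow the same template: in~(i) the boundary components are indexed by $\Z^2$ and impose no constraint on $\alpha_1,\alpha_2$; in~(iii) the mesh boundary is connected and forces both to vanish.
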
  
\begin{proof}[Sketch of the proof in case (ii)]   First, one can verify directly that the family $\left\{   \nabla{p}_2, \nabla {p}^-_3,  \nabla p^+_3\right\}$ is linearly independent (using the limit of $\nabla{p}_2$ and $\nabla {p}^\pm_3$  as $y_3$ tends to $\pm\infty$). Moreover, it is clear that $\nabla{p}_2$ and $\nabla {p}^\pm_3$ belong to $K_N(\mathscr{B}_\infty)$. Now, let $\bu \in  K_N(\mathscr{B}_\infty)$. Since $\mathscr{B}_\infty$ is connected, there exists $p \in H^1_{\rm loc}(\mathscr{B}_\infty)$, unique up to the addition of a constant, such that $\bu = \nabla p$. (This follows e.g.\ from applying \cite[Theorem 3.37]{Monk} on an increasing sequence of nested subsets of $\mathscr{B}_\infty$ after extension of $\mathbf{u}$ by zero inside $\R^3\setminus\overline{\mathscr{B}_\infty}$.) Moreover, $\nabla p$ is periodic and there exists a real sequence $(c_j)_{j\in \Z}$ such that
$$
-\Delta p =0 \;\mbox{in } \mathscr{B}_\infty, \quad p = c_j \; \mbox{on } \partial \mathscr{B}_{\infty,j}  =  \partial \mathscr{B}_\infty \cap \{ j < y_2 < (j+1) \}.
$$  
Because $\nabla p$ is periodic and $\frac{\bu_{|\mathscr{B}}}{\sqrt{1+(y_3)^2}} \in (L^2(\mathscr{B}))^3$, there exists four constants $\alpha_1$, $\alpha_2$, $\alpha_3^\pm$ such that $$\tilde{p} = p - \alpha_1 y_1 - \alpha_2 y_2 - \sum_{\pm} \alpha_3^\pm p^\pm_3 \in  \mathcal{W}_1( \mathscr{B}_\infty).$$
Since 
$\tilde{p}  = c_j  - \alpha_1 y_1 - \alpha_2 y_2$ on $\partial \mathscr{B}_{\infty,j}$, the periodicity of $\tilde{p}$ in $y_1$ implies that $\alpha_1 = 0$, while its periodicity in $y_2$ leads to $c_j = c_0 + \alpha_2 j$.  As a result, 
$$
\tilde{p}  = c_0 - \alpha_2 (y_2 -j) \quad \; \mbox{on } \partial \mathscr{B}_{\infty,j}.
$$
Since $\tilde{p}$ is harmonic, we deduce that $\tilde{p} = c_0 + \alpha_2 \tilde{p}_2$, and hence that $p =c_0+ \alpha_2 p_2 + \sum_{\pm} \alpha_3^\pm p^\pm_3$, which completes the proof. 
Cases (i) and (iii) follow similarly.
%The proof of the case (i) and (iii) follows the same way.
\end{proof}
\subsection{Characterization of $K_T(\mathscr{B}_\infty)$}
\noindent First, let us define $q_3 \in H^1_{\rm loc}(\mathscr{B}_{\infty})$ as the unique function such that 
$$
\begin{cases}
- \Delta q_3 = 0 &   \mbox{in }\mathscr{B}_\infty,\\
\partial_{\bn} q_3= 0 & \mbox{on } \partial\mathscr{B}_\infty, \\
\end{cases}
\quad \lim_{y_3 \rightarrow \pm \infty} \nabla q_3  =  \be_3, \quad \lim_{y_3 \rightarrow + \infty} q_3 - y_3  = 0.
$$ 
Then for $i \in \{1, 2\}$ we introduce the functions $\tilde{q}_i \in \mathcal{W}_{1}(\mathscr{B_\infty})$ and $q_i \in  H^1_{\rm loc}(\mathscr{B}_\infty)$ such that 
$$
\begin{cases}
- \Delta \tilde{q_i} = 0 &   \mbox{in }\mathscr{B}_\infty,\\
\partial_{\bn} \tilde{q_i}=  - \be_i \cdot \bn & \mbox{on } \partial\mathscr{B}_\infty, 
\end{cases}
%\quad \lim_{y_3 \rightarrow \pm \infty} \nabla \tilde{q_i}  = \mathbf{0},   
\quad \lim_{y_3 \rightarrow + \infty} \tilde{q_i}  = 0,  \quad \mbox{and} \quad
q_i = \tilde{q}_i + y_i.
$$ 
In case (ii) we introduce a set of `cuts' $\Sigma$ defined by 
$$ 
\Sigma = \bigcup_{j \in \Z} \Sigma_j,  \quad \mbox{where} \quad \Sigma_j = \Sigma_0 + j \be_2, \quad 
\Sigma_0  = (-\infty,\infty)\times(-\tfrac{3}{8},\tfrac{3}{8})\times\{0\}.
%\Sigma_0 = \{ (y_1, y_2, y_3) \in \R^3, y_2 \in ]-\frac{1}{4} \; \frac{1}{4}[, y_3 =0\}
$$ 
Similarly, in case (iii) we introduce the cuts
$$
\Sigma = \bigcup_{(i,j) \in \Z^2} \Sigma_{ij},  \quad \mbox{where} \quad \Sigma_{ij} =   \Sigma_{0 0} + i \be_1 +  j \be_2,   \quad 
\Sigma_{0 0}= (-\tfrac{3}{8},\tfrac{3}{8})^2\times\{0\}.
%\Sigma_{0 0}=  \{ (y_1, y_2, y_3) \in \R^3, (y_1, y_2) \in ]-\frac{1}{4} \; \frac{1}{4}[^2, y_3 =0\} 
$$ 
In both cases, $\mathscr{B}_\infty \setminus \Sigma$ is then the union of the two simply connected domains $
\mathscr{B}_\infty^\pm = \left( \mathscr{B}_\infty \setminus \Sigma \right) \cap \{ \pm y_3 > 0\} $.
We denote by $\mathcal{W}_1(\mathscr{B}_\infty^\pm)$ the space defined by formula~\eqref{W1} replacing $\mathscr{B}_\infty$ with $\mathscr{B}_\infty^\pm$.
In case (ii) we let  $ \qtwoperpm= \left( ( \qtwoperpm)_+ ,( \qtwoperpm)_-\right) \in \mathcal{W}_1(\mathscr{B}_\infty^+) \times  \mathcal{W}_1(\mathscr{B}_\infty^-)$ be the unique solutions to \begin{equation}\label{p2plus}
\begin{cases}
- \Delta \qtwoperpm = 0  & \mbox{in } \mathscr{B}_\infty\setminus \Sigma  , \\
\partial_\bn  \qtwoperpm = -\be_2 \cdot \bn  & \mbox{on } \partial \mathscr{B}_\infty^\pm \cap \partial \mathscr{B}_\infty ,\\
\partial_\bn  \qtwoperpm = 0  & \mbox{on } \partial \mathscr{B}_\infty^\mp\cap \partial \mathscr{B}_\infty,
\end{cases}\quad 
\begin{cases}
 [ \qtwoperpm ]_{\Sigma_j} =  \pm (j-y_2), \\
[ \partial_{y_3}   \qtwoperpm]_{\Sigma_j} = 0 ,
\end{cases}
\lim_{y_3 \rightarrow +\infty}   \qtwoperpm =0 , 
 \quad \quad \end{equation} 
and we define $q^\pm_2 = \qtwoperpm + y_2 1_{ \mathscr{B}_\infty^\pm}$,
 $1_{ \mathscr{B}_\infty^\pm}$ being the indicator function of $\mathscr{B}_\infty^\pm$. In case (iii) the functions $q^\pm_2$ are defined similarly, except that we replace $\Sigma_{j}$ by $\Sigma_{ij}$ in the jump conditions. 
% replacing the jump conditions with 
%$$[ \qtwoperpm ]_{\Sigma_{ij}} =  \pm (j-y_2) \quad \mbox{and} 
%\quad [ \partial_{y_3}   \qtwoperpm]_{\Sigma_{ij}} = 0. 
%$$
In case (iii) we additionally introduce the functions $ \qoneperpm= \left( (  \qoneperpm)_+ ,(  \qoneperpm)_-\right) \in \mathcal{W}_1(\mathscr{B}_\infty^+) \times  \mathcal{W}_1(\mathscr{B}_\infty^-)$ as the unique solutions to
\begin{equation}\label{p2plus}
\begin{cases}
- \Delta \qoneperpm= 0  & \mbox{in } \mathscr{B}_\infty\setminus \Sigma  , \\
\partial_\bn  \qoneperpm = -\be_1 \cdot \bn  & \mbox{on } \partial \mathscr{B}_\infty^\pm\cap \partial \mathscr{B}_\infty,\\
\partial_\bn  \qoneperpm= 0  & \mbox{on } \partial \mathscr{B}_\infty^\mp\cap \partial \mathscr{B}_\infty,
\end{cases} \;\;
\begin{cases}
[ \qoneperpm ]_{\Sigma_{ij}} =  \pm (i-y_1), \\
[ \partial_{y_3}   \qoneperpm]_{\Sigma_{ij}} = 0, \\
\end{cases} \;
\lim_{y_3 \rightarrow +\infty}   \qoneperpm =0, 
\end{equation} 
and we define $q^\pm_1 =  \qoneperpm + y_1 1_{ \mathscr{B}_\infty^\pm}$. Then, adapting the proof of \cite[Proposition 3.14]{DaugeBernardiAmrouche} one obtains the following result:
 \begin{proposition}\label{PropositionKT}~\\[-3ex]
  \begin{enumerate}
 \item[]{Case (i)}:  $K_T$ is the space of dimension $3$ given by
 $
 K_T(\mathscr{B}_\infty)  = span \left\{\nabla  {q}_1,  \nabla  {q}_2 ,  \nabla {q}_{3}  \right\}.
 $ 
 \item[]{Case (ii)}: $K_T$ is the space of dimension $4$ given by
$
 K_T(\mathscr{B}_\infty)=  span \left\{ \nabla  {q}_1,  \nabla  {q}^+_2 ,  \nabla  {q}^-_2, \nabla {q}_{3}\right\}.
 $
 \item[] {Case (iii)}: $K_T$ is the space of dimension $5$ given by
$
  K_T(\mathscr{B}_\infty)=  span \left\{  \nabla  {q}^+_1 ,  \nabla  {q}^-_1, \nabla  {q}^+_2 ,  \nabla  {q}^-_2, \nabla {q}_{3}\right\}.
 $
\end{enumerate} 
\end{proposition}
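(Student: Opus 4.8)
The plan is to characterize $K_T(\mathscr{B}_\infty)$ in the classical way, by representing each of its elements as the gradient of a harmonic scalar potential — single-valued in case (i), but generically multivalued (with jumps across the cuts $\Sigma$) in cases (ii) and (iii) — and then reading off from the geometry how many free parameters such potentials carry. The two features that go beyond the bounded-domain argument of \cite[Proposition 3.14]{DaugeBernardiAmrouche} are the unboundedness in the $y_3$-direction, handled via the weight $\sqrt{1+(y_3)^2}$ and the Hardy inequality already invoked above, and the interplay between the $(y_1,y_2)$-periodicity and the cut structure.

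I would begin with the easy inclusions: each listed field lies in $K_T(\mathscr{B}_\infty)$. Being a gradient it is curl-free; the harmonicity of $q_3,\tilde q_i,\qtwoperpm,\qoneperpm$ makes it divergence-free; the Neumann conditions $\partial_\bn q_3 = 0$, $\partial_\bn\tilde q_i = -\be_i\cdot\bn$, $\partial_\bn\qtwoperpm = -\be_2\cdot\bn$ (etc.) translate exactly into $\bh\cdot\bn = 0$ on $\partial\mathscr{B}_\infty$; and the membership of the tilded potentials in $\mathcal W_1$, together with the fact that the auxiliary terms $y_i$, $y_i\mathbf 1_{\mathscr{B}_\infty^\pm}$ and $q_3$ contribute fields that on $\mathscr{B}$ are either bounded or of the order of the weight, gives the weighted $L^2$ integrability (the $L^2$ conditions on $\rot$ and $\dive$ being vacuous). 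The point deserving care is that $\nabla\qtwoperpm$ and $\nabla\qoneperpm$ are genuinely jump-free across $\Sigma$: the prescribed jumps $\pm(j-y_2)$ and $\pm(i-y_1)$ are exactly cancelled by the jump of $y_2\mathbf 1_{\mathscr{B}_\infty^\pm}$ (resp. $y_1\mathbf 1_{\mathscr{B}_\infty^\pm}$), so that $[q^\pm_i]_\Sigma$ is locally constant and $[\partial_{y_3}q^\pm_i]_\Sigma = 0$, whence $[\nabla q^\pm_i]_\Sigma = \boldsymbol 0$ and $\nabla q^\pm_i \in H_{\rm loc}(\rot;\mathscr{B}_\infty)$. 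Linear independence of the listed families then follows from their limits as $y_3\to\pm\infty$, which are prescribed to be distinct (e.g. $\nabla q^+_i\to\be_i$ at $+\infty$ but $\to\boldsymbol 0$ at $-\infty$, while $\nabla q_3\to\be_3$ at both ends, etc.), so any vanishing combination has all coefficients zero.

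The substantive part is that the listed fields span $K_T(\mathscr{B}_\infty)$. Take $\bh\in K_T(\mathscr{B}_\infty)$. Since $\rot\bh = 0$ and $\mathscr{B}_\infty\setminus\Sigma$ is a disjoint union of simply connected open sets — in case (i) there is no cut and $\mathscr{B}_\infty$ itself is simply connected, since removing a $1$-periodic family of bounded simply connected obstacles from $\R^3$ leaves a simply connected domain — there is a potential $p$, single-valued on each component, with $\bh = \nabla p$; existence in the unbounded geometry follows by the exhaustion/zero-extension argument already used for $K_N$. Then $\Delta p = 0$, $\partial_\bn p = 0$ on $\partial\mathscr{B}_\infty$; continuity of $\bh$ across $\Sigma$ forces $[p]$ to be a constant $m_j$ on each $\Sigma_j$ (resp. $m_{ij}$ on each $\Sigma_{ij}$); and periodicity of $\nabla p$ forces $p$ to be quasi-periodic and the constants $m_j$ (resp. $m_{ij}$) to be affine in $j$ (resp. $(i,j)$). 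Expanding $\bh$ in a Fourier series in $(y_1,y_2)$ for $|y_3|$ large (where there are neither obstacles nor cuts), the nonconstant modes decay exponentially while the weighted $L^2$ bound forces the zeroth mode to be constant, so $\bh\to\bh^\pm$ for constant vectors $\bh^\pm$ whose horizontal components are the quasi-periods of $p$ and whose $\be_3$-component is a flux that is \emph{the same} at $+\infty$ and $-\infty$, by integrating $\dive\bh = 0$ over a truncated period cell. These quasi-periods and jump-slopes are precisely the parameters counted in the statement ($3$, $4$, $5$ respectively); subtracting the corresponding linear combination of generators leaves $\nabla\tilde p$ where $\tilde p$ is harmonic, satisfies the Neumann condition, has a single global constant jump across $\Sigma$ (so that $\nabla\tilde p$ is jump-free and periodic), and has $\nabla\tilde p\to\boldsymbol 0$ at both ends. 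A Green's-identity/energy computation on $\mathscr{B}$ — all boundary and cut contributions vanishing by the Neumann condition, periodicity, the decay at infinity, and the vanishing of the flux of $\nabla\tilde p$ through the cuts — then gives $\nabla\tilde p = \boldsymbol 0$, i.e. $\bh$ equals the chosen linear combination, which completes the proof. Cases (i), (ii), (iii) differ only in the number of cuts and hence of free parameters.

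I expect the main obstacle to be this final bookkeeping: setting up the quasi-periodic, jump-decorated potential for a general $\bh\in K_T(\mathscr{B}_\infty)$, reading off its limiting behaviour at $\pm\infty$, and matching all the parameters against the prescribed data of $\nabla q_3$, $\nabla q_i$ and $\nabla q^\pm_i$ so that the leftover jump is a genuine global constant that can be absorbed by a one-sided shift of the potential and the energy identity can be closed. A secondary technical point is making precise the well-posedness and the behaviour at $\pm\infty$ of the multivalued auxiliary potentials $\qtwoperpm$ and $\qoneperpm$ on the cut domains $\mathscr{B}_\infty^\pm$, since these underlie both the independence and the spanning arguments.
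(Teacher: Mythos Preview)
Your proposal is correct and follows essentially the same route as the paper's sketch: represent $\bh$ as the gradient of a harmonic Neumann potential on the simply connected pieces $\mathscr{B}_\infty^\pm$, extract quasi-periods $\alpha_i^\pm$ and an $\be_3$-flux $\alpha_3$ from periodicity and the weighted $L^2$ bound, use the jump condition on $\Sigma$ to force the appropriate relations (e.g.\ $[\alpha_1]=0$ and $c_j=c_0+j[\alpha_2]$ in case (ii)), subtract the matching combination of generators, and kill the remainder by an energy/uniqueness argument. The only cosmetic differences are that the paper packages the behaviour at infinity as membership in $\mathcal W_1(\mathscr{B}_\infty^\pm)$ rather than via a Fourier expansion, and records the vanishing-flux condition $\int_{\Sigma_j}\partial_{y_3}\hat q=0$ explicitly as the ingredient that closes the final step.
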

\begin{proof}[Sketch of the proof in case (ii)]
As in the proof of Proposition~\ref{PropositionKN}, it is not difficult to prove that the family $\{ \nabla q_1, \nabla q_2^+, \nabla q_2^-, \nabla q_3\}$ is linearly independent and that its elements belong to  $K_T(\mathscr{B}_\infty)$. Then,
let $\bh = (\bh_+, \bh_-) \in K_T(\mathscr{B}_\infty)$. Since $\mathscr{B}_\infty^\pm$ are simply connected, there exists $q = (q_+, q_-) \in H^1_{\rm loc}(\mathscr{B}_\infty^+)\times H^1_{\rm loc}(\mathscr{B}_\infty^-)$ and a real sequence $(c_j)_{j\in \Z}$ such that 
$$
\bh_\pm = \nabla q_\pm, 
\quad \Delta q = 0   \mbox{ in } \mathscr{B}_\infty\setminus \Sigma, \quad  \partial_\bn q = 0 \mbox{ on } \partial  \mathscr{B}_\infty,
\quad [ q ]_{\Sigma_j} = c_j, \quad [\partial_{y_3}q ]_{\Sigma_j} =0.
$$ 
Since $\nabla q$ is periodic and $\frac{\bh_{|\mathscr{B}}}{\sqrt{1+(y_3)^2}} \in (L^2(\mathscr{B}))^3$, there exist five constants  $\alpha_1^\pm$, $\alpha_2^\pm$ and $\alpha_3$ such that
$$
\tilde{q}_\pm= q_\pm- \alpha_1^\pm y_1 -  \alpha_2^\pm y_2  - \alpha_3 q_3 \in  \mathcal{W}_1( \mathscr{B}_\infty^\pm).
$$
Because $\tilde{q}=(\tilde{q}_+,\tilde{q}_-)$ satisfies $[ \tilde{q}]_{\Sigma_j} = c_j - [\alpha_1] y_1 - [\alpha_2] y_2$ and $\tilde{q}$ is periodic, we find that $[\alpha_1]=0$, and $
c_j = c_0 +  j [\alpha_2]$  for each $j \in \Z$. To conclude, it suffices to note that $\hat{q} = q - \alpha_1 q_1  - \sum_{\pm} \alpha_2^\pm q_{2, \pm} - \alpha_3 q_3$ is periodic and satisfies
$$
\begin{cases}
- \Delta \hat{q}= 0 & \mbox{in }  \mathscr{B}_\infty \setminus \Sigma, \\
\partial_\bn \hat{q} = 0 & \mbox{in }  \partial \mathscr{B}_\infty, \\
\end{cases} \quad
\begin{cases} 
[ \hat{q}]_{\Sigma_j} = c_0,    \\
[ \partial_\bn  \hat{q}]_{\Sigma_j} =0, \\
\end{cases} \quad
\int_{\Sigma_j} \partial_{y_3}  \hat{q}  = 0, 
$$
which proves that $\hat{q}$ is constant in each of $\mathscr{B}_\infty^\pm$.
\end{proof}
 \section{Formal proof of Theorem~\ref{Prop1}}
 \noindent We treat the three cases separately. In case (i), using  Propositions~\ref{PropositionKN}-\ref{PropositionKT}, we have
 $$
 \bU_0 = \sum_{i=1}^2 a_i(x_1, x_2) \nabla p_i + \sum_{\pm}  a_3^\pm \nabla p^\pm_3  \quad \mbox{and} \quad \bH_0 = \sum_{i=1}^3 b_i(x_1, x_2) \nabla q_i. 
 $$ 
 The behaviour at infinity of the functions $p_i$ and $q_i$ and the matching conditions~\eqref{MatchingConditionOrdre0} then imply
 $$
 a_i = (\bu_0)_i^\pm(x_1, x_2,0) \quad b_i  = (\bh_0)_i^\pm(x_1, x_2,0)  \quad \forall  i \in \{ 1, 2 \},
 $$ 
and, consequently (by~\eqref{MaxwellOrdre1}), that 
%$[  \bu_0]_\Gamma = 0$ 
$[\bu_0\times \be_3]_\Gamma = 0$ 
and $[ \rot \bu_0 \times \be_3]_\Gamma=0$. In case (ii) we have 
$$
\bU_0 = a_2(x_1, x_2) \nabla p_2 + \sum_{\pm}  a_3^\pm \nabla p^\pm_3  \quad \mbox{and} \quad \bH_0 = b_1(x_1, x_2) \nabla q_1  + \sum_{\pm} b_2^\pm(x_1, x_2) \nabla q_2^\pm +b_3(x_1, x_2) \nabla q_3,
$$ 
which, together with the matching conditions~\eqref{MatchingConditionOrdre0}, leads to
$
(\bu_0)_1^\pm(x_1,x_2) = 0$,  $[(\bu_0)_2]_\Gamma = 0$, $[(\bh_0)_1]_\Gamma =0$.
Finally, in case (iii) we have
$
\bU_0  = \sum_{\pm}  a_3^\pm \nabla p^\pm_3$,
which implies that $(\bu_0)_i^\pm(x_1, x_2) =0$ for $i =1 $ or $2$. 

\smallskip
\begin{rem}
We point out that our formal proof can be made rigorous by justifying the asymptotic expansions~\eqref{FFExpansion}-\eqref{NFExpansion}. This can be done a posteriori by constructing an approximation of $\bu^\delta$ on $\Omega^\delta$  (based on the truncated series~\eqref{FFExpansion}-\eqref{NFExpansion}) and using the stability estimate~\eqref{Stability} (see \cite{MazyaNazarovPlam}).  However, this would require us to identify the terms of order $1$ in the expansions, which is beyond the scope of this note.
\end{rem}

\end{document}